\documentclass[12pt]{amsart}
\usepackage{amscd,amsmath,amsthm,amssymb}
\usepackage{pstcol,pst-plot,pst-3d}
\usepackage{color}
\usepackage{pstricks}
\usepackage{stmaryrd}
\newpsstyle{fatline}{linewidth=1.5pt}
\newpsstyle{fyp}{fillstyle=solid,fillcolor=verylight}
\definecolor{verylight}{gray}{0.97}
\definecolor{light}{gray}{0.9}
\definecolor{medium}{gray}{0.85}
\definecolor{dark}{gray}{0.6}

 %
 %
 %
 \def\NZQ{\mathbb}               
 \def\NN{{\NZQ N}}
 
 \def\ZZ{{\NZQ Z}}

 %
 %
 \def\frk{\mathfrak}               

 \def\mm{{\frk m}}

 %

 \def\G{{\mathcal G}}

 %

 %
 \def\opn#1#2{\def#1{\operatorname{#2}}} 
 %
 \opn\chara{char} \opn\length{\ell} \opn\pd{pd} \opn\rk{rk}
 \opn\projdim{proj\,dim} \opn\injdim{inj\,dim} \opn\rank{rank}
 \opn\depth{depth} \opn\grade{grade} \opn\height{height}
 \opn\embdim{emb\,dim} \opn\codim{codim}
 
 \opn\Tr{Tr} \opn\bigrank{big\,rank}
 \opn\superheight{superheight}\opn\lcm{lcm}
 \opn\trdeg{tr\,deg}
 \opn\reg{reg} \opn\lreg{lreg} \opn\ini{in} \opn\lpd{lpd}
 \opn\size{size} \opn\sdepth{sdepth}
 \opn\link{link}\opn\fdepth{fdepth}\opn\lex{lex}
 \opn\tr{tr}
 \opn\type{type}
 %
 \opn\div{div} \opn\Div{Div} \opn\cl{cl} \opn\Cl{Cl}
 %
 %
 \opn\Spec{Spec} \opn\Supp{Supp} \opn\supp{supp} \opn\Sing{Sing}
 \opn\Ass{Ass} \opn\Min{Min}\opn\Mon{Mon}
 %
 %
 \opn\Ann{Ann} \opn\Rad{Rad} \opn\Soc{Soc}
 %
 %
 \opn\Im{Im} \opn\Ker{Ker} \opn\Coker{Coker} \opn\Am{Am}
 \opn\Hom{Hom} \opn\Tor{Tor} \opn\Ext{Ext} \opn\End{End}
 \opn\Aut{Aut} \opn\id{id}
 
 \opn\nat{nat}
 \opn\pff{pf}
 \opn\Pf{Pf} \opn\GL{GL} \opn\SL{SL} \opn\mod{mod} \opn\ord{ord}
 \opn\Gin{Gin} \opn\Hilb{Hilb}\opn\sort{sort}
 \opn\PF{PF}\opn\Ap{Ap}
 %
 %
 \opn\aff{aff} \opn
 \con{conv} \opn\relint{relint} \opn\st{st}
 \opn\lk{lk} \opn\cn{cn} \opn\core{core} \opn\vol{vol}  \opn\inp{inp} \opn\nilpot{nilpot}
 \opn\link{link} \opn\star{star}\opn\lex{lex}\opn\set{set}
 \opn\width{wd}
 \opn\Fr{F}
 \opn\QF{QF}
 \opn\G{G}
 \opn\type{type}\opn\res{res}
 \opn\gr{gr}
 
 %
 %
 
 \def\pot#1#2{#1[\kern-0.28ex[#2]\kern-0.28ex]}

 %
 %
 \opn\dirlim{\underrightarrow{\lim}}
 \opn\inivlim{\underleftarrow{\lim}}
 %
 %
 %

 %
 %
 \let\to=\rightarrow
 
 \def\Implies{\ifmmode\Longrightarrow \else
         \unskip${}\Longrightarrow{}$\ignorespaces\fi}
 \def\implies{\ifmmode\Rightarrow \else
         \unskip${}\Rightarrow{}$\ignorespaces\fi}
 \def\iff{\ifmmode\Longleftrightarrow \else
         \unskip${}\Longleftrightarrow{}$\ignorespaces\fi}

 \let\:=\colon
 \newtheorem{Theorem}{Theorem}[section]
 \newtheorem{Lemma}[Theorem]{Lemma}
 \newtheorem{Corollary}[Theorem]{Corollary}
 \newtheorem{Proposition}[Theorem]{Proposition}
 \newtheorem{Remark}[Theorem]{Remark}
 
 \newtheorem{Example}[Theorem]{Example}

 %
 %
 \let\epsilon\varepsilon
 \let\kappa=\varkappa
 %
 %
 \textwidth=15cm \textheight=22cm \topmargin=0.5cm
 \oddsidemargin=0.5cm \evensidemargin=0.5cm \pagestyle{plain}
 %
 %
 \def\qed{\ifhmode\textqed\fi
       \ifmmode\ifinner\quad\qedsymbol\else\dispqed\fi\fi}
 \def\textqed{\unskip\nobreak\penalty50
        \hskip2em\hbox{}\nobreak\hfil\qedsymbol
        \parfillskip=0pt \finalhyphendemerits=0}
 \def\dispqed{\rlap{\qquad\qedsymbol}}
 
 %
 \opn\dis{dis}
 \def\pnt{{\raise0.5mm\hbox{\large\bf.}}}
 
 \opn\Lex{Lex}

 


\usepackage{tikz}
\newcommand{\divs}[1]{\textrm{div}(#1)}

\begin{document}

\title{Monomial ideals with tiny squares}
\author{Shalom Eliahou, J\"urgen Herzog and Maryam Mohammadi Saem}

\address{Shalom Eliahou, Univ. Littoral C\^ote d'Opale, EA 2597 - LMPA - Laboratoire de Math\'ematiques Pures et Appliqu\'ees Joseph Liouville, F-62228
 Calais, France and CNRS, FR 2956, France} \email{eliahou@univ-littoral.fr}

\address{J\"urgen Herzog, Fachbereich Mathematik, Universit\"at Duisburg-Essen, Campus Essen, 45117
Essen, Germany} \email{juergen.herzog@uni-essen.de}

\address{Maryam Mohammadi Saem, Faculty of Science, University of Mohaghegh Ardabili, P.O. Box 179, Ardabil, Iran}
\email{m.mohammadisaem@yahoo.com}

\begin{abstract} Let $I \subset K[x,y]$ be a monomial ideal. How small can $\mu(I^2)$ be in terms of $\mu(I)$? It has been expected that the inequality
$\mu(I^2) > \mu(I)$ should hold whenever $\mu(I) \ge 2$. Here we disprove this expectation and provide a somewhat surprising answer to the above question.
\end{abstract}

\maketitle

\section{Introduction}
For an ideal $I$ in a Noetherian ring $R$, let $\mu(I)$ denote as usual the least number of generators of $I$. If $\mu(I)=m$, how small can
$\mu(I^2)$ be in terms of $m$? Obviously, in suitable rings with zero-divisors, we may have $\mu(I^2)=0$. There even exist one-dimensional local domains $(R,\mm)$ with the property that the square of their maximal ideal $\mm$ requires less generators than $\mm$ itself, see \cite{HW,O}. However, if $R$ is a regular local ring, or if $R$ is a polynomial ring over a field $K$ and $I$ is a homogeneous ideal of $R$, it has been expected in \cite{HMZ} that the inequality $\mu(I^2) > \mu(I)$ should hold whenever $\mu(I) \ge 2$. This is indeed the case for any integrally closed
ideal $I$ in a $2$-dimensional regular local ring. On the other hand, it is not  too difficult to construct examples of monomial ideals $I$ in a polynomial ring $S$ with at least 4 variables such that $\mu(I^2)<\mu(I)$. However, these examples satisfy $\height I<\dim S$. So far no ideals $I$ with  $\mu(I^2)<\mu(I)$ were known for $2$-dimensional regular  rings. In this paper, we shall prove the following statements.

 \begin{Theorem}\label{nine}
 For every integer $m \geq 5$, there exists a monomial ideal $I\subset K[x,y]$ such that $\mu(I)=m$ and $\mu(I^2) = 9$.
 \end{Theorem}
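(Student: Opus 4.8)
The plan is to convert the statement into a purely combinatorial problem about lattice points and then exhibit an explicit family. Writing the minimal monomial generators of $I$ as $u_i=x^{a_i}y^{b_i}$ with $a_1>\dots>a_m$ and $b_1<\dots<b_m$, the exponent vectors $v_i=(a_i,b_i)$ form an antichain, a ``staircase.'' Since $I^2$ is generated by the products $u_iu_j$, its minimal generators correspond exactly to the minimal elements, under the componentwise order, of the sumset $\{v_i+v_j : 1\le i\le j\le m\}$; so the question becomes how few minimal elements this sumset can have. The basic bookkeeping device I would set up is to sort the sums by their \emph{index-sum} $\sigma=i+j\in\{0,\dots,2m-2\}$. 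When the gap sequences $a_i-a_{i+1}$ and $b_{i+1}-b_i$ are monotone (the convex, concave and collinear cases), a sum $v_i+v_j$ can only be dominated by another sum of the \emph{same} class $\sigma$, each of the $2m-1$ classes then contributes exactly one minimal element, and one gets the familiar value $\mu(I^2)=2m-1$. The decisive observation, which I would verify on a small example, is that this is not forced: when the gaps are non-monotone, a ``spread'' product $v_kv_l$ can become componentwise smaller than \emph{every} product of a neighbouring class, so an entire index-sum class is annihilated and the count drops.

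With this mechanism identified, the core of the proof is to construct, for each $m\ge5$, a staircase of length $m$ in which all but nine of the $2m-1$ index-sum classes are completely killed. I would build it explicitly: keep the two pure powers $x^{a_1}$ and $y^{b_m}$ as the outermost generators (so that the corner sums $2v_1$ and $2v_m$ always remain minimal and anchor the frontier), and place the remaining interior generators in clusters whose gaps grow rapidly. The point of the clustering is that for each ``doomed'' class $\sigma$ the most spread-out sum of that class dominates all the others, while being itself dominated by a sum of an adjacent surviving class; choosing the gaps so that these dominations cascade leaves precisely nine surviving classes. The natural base case $m=5$ is five generators in convex position, for which all $2\cdot5-1=9$ classes survive and $\mu(I^2)=9$, which is where the number nine enters.

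To make the bound uniform in $m$, I would phrase the passage from $m$ to $m+1$ as an insertion lemma: adding one suitably placed interior generator increases $\mu(I)$ by exactly one while leaving $\mu(I^2)$ equal to $9$, because the two new index-sum classes it creates are themselves killed and two further classes are absorbed, for a net change of zero in the number of surviving classes. I would prove this by listing the finitely many domination inequalities $v_k+v_l\le v_i+v_j$ that must hold, checking that the prescribed exponents satisfy them, and confirming that the new generator is genuinely a new minimal generator (incomparable to the others). Iterating the lemma from the five-generator core then produces the required $I$ for every $m\ge5$; the verification of $\mu(I^2)=9$ reduces to identifying, class by class, the unique surviving minimal sum and exhibiting a dominating sum for every member of every killed class.

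The step I expect to be the main obstacle is the \emph{mutual consistency} of all these domination inequalities. Each individual kill is easy to arrange, but the conditions that annihilate two neighbouring classes pull the gap sequence in opposite directions, and a naive attempt meets a genuine transitivity obstruction: one cannot route both members of a two-element class through a single dominating sum, since the resulting chain would force a comparison $v_0+v_{\sigma}\le v_0+v_{\sigma-1}$ that fails in the $b$-coordinate. The delicate task is therefore to pin down one explicit gap sequence—growing fast enough that the clusters separate cleanly, yet balanced enough that each doomed class is dominated from a different side—so that all of the roughly $2m-10$ required dominations hold simultaneously and exactly nine classes survive, uniformly in $m$. It is the solvability of this simultaneous system, rather than any single domination, that constitutes the real work.
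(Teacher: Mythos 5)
Your proposal correctly identifies the mechanism that the paper itself exploits---four extreme ``corner'' generators whose pairwise products supply the nine survivors, and a clustered interior whose products are all absorbed---but as written it is a plan, not a proof, and the gap is the one you name yourself in your final paragraph: the explicit exponent sequence making all the required dominations hold simultaneously is never produced. Since the theorem is an existence statement, exhibiting that sequence \emph{is} the theorem; asserting that the ``simultaneous system'' is solvable is the claim to be proved, not a reduction of it. Worse, the concrete route you sketch (a convex five-generator base case plus an insertion lemma) cannot work as stated, because convexity of the base is incompatible with the insertion step. Take the collinear base $I=(x^8,x^6y^2,x^4y^4,x^2y^6,y^8)$, which indeed has $\mu(I)=5$ and $\mu(I^2)=9$: the only monomial insertable between the second and third generators is $w=x^5y^3$, which has the same degree $8$, so the enlarged ideal is again generated in a single degree and $\mu(I^2)\ge 2\cdot 6-1=11$ by Proposition~\ref{single degree}. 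A strictly convex base fares no better: for $I=(x^{10},x^6y,x^3y^3,xy^6,y^{10})$ one checks $\mu(I^2)=9$, but inserting $w=x^4y^2$ creates the new minimal generators $x^{10}y^3$, $x^8y^4$, $x^7y^5$, $x^5y^8$ in the square while killing only $x^9y^4$, so $\mu$ jumps to $12$. The structural reason is that in convex position the interior generators lie close to the Newton boundary, so products involving them tend to be minimal themselves---exactly the opposite of what your kill mechanism needs, namely an interior cluster of much \emph{higher} degree than the four corners. Any base from which your insertion lemma could be launched must therefore already be the non-convex, two-scale configuration whose existence is the content of the theorem, so the induction has no admissible starting point until the real construction is done.

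For comparison, the paper disposes of your feared ``mutual consistency'' problem with two finite devices, and this is where the approaches genuinely differ. First, Theorem~\ref{conditions} isolates five divisibility conditions involving only the six generators $u_1,u_2,u_3,u_{m-2},u_{m-1},u_m$ and shows they force $\mu(I^2)=9$; the key is Lemma~\ref{interval}, which says that if a product divides the two endpoints $f(v_1),f(v_2)$ of an interval in the index poset then it divides every product in between, so the $O(m)$ dominations you worry about collapse to a bounded number of checks independent of $m$. Second, Proposition~\ref{crazy ideal} writes down explicit exponents $(a_1,\dots,a_m)=(5m,4m,4m-1,\dots,3m+4,m,0)$ with $b_i=a_{m+1-i}$: the corners have degree $5m$, the interior cluster degree $7m+3$, and the entire verification amounts to comparing fifteen explicit monomials, uniformly in $m$. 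Incidentally, once Theorem~\ref{conditions} is in hand, your insertion idea does become correct: inserting a generator incomparable to the others strictly between $u_j$ and $u_{j+1}$ for some $3\le j\le m-3$ leaves the six distinguished generators, hence the five conditions, unchanged, so the square of the enlarged ideal again has nine generators. But that rescue presupposes precisely the uniform sufficient-condition theorem and the two-scale base configuration that your proposal leaves unconstructed.
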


Moreover, this result is best possible for $m \ge 6$.
 \begin{Theorem}\label{ge nine} Let $I\subset K[x,y]$ be a monomial ideal. If $\mu(I) \ge 6$ then $\mu(I^2) \ge 9$.
 \end{Theorem}

Here are some notation to be used throughout. We denote by $\mathcal{M}$ the set of monomials in $K[x,y]$, i.e.
$$
\mathcal{M} = \{x^iy^j \mid i,j \in \NN\}.
$$
As usual, we view $\mathcal{M}$ as partially ordered by divisibility.

For a monomial ideal $J \subset K[x,y]$, we denote by $\mathcal{G}(J)$ its unique minimal system of monomial generators. It is well known that
$\mathcal{G}(J)$ is of cardinality $\mu(J)$ and consists of all monomials in $J$ which are minimal under divisibility, i.e.
$$
\mathcal{G}(J) = \big(\mathcal{M} \cap J\big) \setminus \big(\mathcal{M} \cap J\big)\mathcal{M}^*
$$
where $\mathcal{M}^*=\mathcal{M} \setminus \{1\}$.

\smallskip
Finally, given integers $a \le b$, we denote by $[a,b]$ the \emph{integer interval} they span, i.e.
$$
[a,b] = \{c \in \ZZ \mid a \le c \le b\}.
$$

\section{Preliminaries}

Let $m \ge 2$ be an integer and let $I \subset K[x,y]$ be a  monomial ideal such that $\mu(I)=m$. Then $\mathcal{G}(I)=\{u_1,\dots,u_m\}$, where
$u_i=x^{a_i}y^{b_i}$ for all $i$ and where the exponents $a_i,b_i \in \NN$ satisfy
\begin{equation}\label{monomials ui}
\begin{array}{lcr} \vspace{0.15cm}
& a_1 > a_2 > \dots > a_m, & \\
& b_1 < b_2 < \dots < b_m. &
\end{array}
\end{equation}
Removing any common factor among the $u_i$, we may assume $a_m=b_1=0$ if desired.

\smallskip
Let $V = \{(i,j) \in \NN^2 \mid 1 \le i \le j \le m\}$, and consider the map
$$
\begin{array}{rccc}
f \colon & V & \to & I^2 \\
& (i,j) & \mapsto & u_iu_j.
\end{array}
$$
Then $f(V)$ generates $I^2$, but not minimally so in general. Thus $\mathcal{G}(I^2) \subseteq f(V)$, with equality only occurring in special
circumstances. How small  can $\mathcal{G}(I^2)$ be? So far this was not well understood. We provide a complete answer in this note.

\smallskip
We partially order $\NN^2$ as follows:
$$
(i,j) \le (k,l) \iff i \le k \mbox{ and } j \le l.
$$
Since $V \subset \NN^2$, we also view $V$ as partially ordered by $\le$. Recall that $\mathcal{M}$ is partially ordered by divisibility. These orderings
interact in a simple yet useful way, as shown by the lemma below. Recall that an \emph{antichain} in a poset is a subset whose elements are pairwise
noncomparable.

\begin{Lemma}\label{antichain} Let $v, v' \in V$. If $f(v)$ divides $f(v')$, then either $v = v'$ or else $\{v,v'\}$ is an antichain in $V$.
\end{Lemma}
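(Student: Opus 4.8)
The plan is to extract from the divisibility hypothesis $f(v)\mid f(v')$ two numerical inequalities on exponents, and then exploit the \emph{opposing} monotonicities recorded in \eqref{monomials ui}: as the index grows, the $x$-exponent $a_i$ strictly decreases while the $y$-exponent $b_i$ strictly increases. Writing $v=(i,j)$ and $v'=(k,l)$, the monomial $f(v)=x^{a_i+a_j}y^{b_i+b_j}$ divides $f(v')=x^{a_k+a_l}y^{b_k+b_l}$ precisely when
$$
a_i+a_j \le a_k+a_l \quad\text{and}\quad b_i+b_j \le b_k+b_l.
$$
I would take these two inequalities as the sole consequences of the hypothesis to be used.

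Next I would argue by contradiction: assume $f(v)\mid f(v')$, that $v\ne v'$, and that $v$ and $v'$ are comparable in the coordinatewise order on $V$, so that either $v<v'$ or $v'<v$. The key observation is that each of these two cases is ruled out by a \emph{different} one of the two displayed inequalities. If $v<v'$, then $i\le k$ and $j\le l$ with at least one inequality strict; since the sequence $(a_i)$ is strictly decreasing, this forces $a_i+a_j > a_k+a_l$, contradicting the first inequality. Symmetrically, if $v'<v$, then $k\le i$ and $l\le j$ with at least one inequality strict; since $(b_i)$ is strictly increasing, this forces $b_i+b_j > b_k+b_l$, contradicting the second inequality.

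Having excluded both possibilities, $v$ and $v'$ must be noncomparable, i.e.\ $\{v,v'\}$ is an antichain, as claimed. The proof is short, and the only point demanding care is to pair each comparability case with the correct coordinate: the case $v<v'$ is ruled out by the $x$-exponent inequality and the case $v'<v$ by the $y$-exponent inequality. There is no serious obstacle here; the whole content lies in observing that the monotonicities in \eqref{monomials ui} make the divisibility order on the monomials $f(v)$ and the coordinatewise order on $V$ pull against each other in exactly complementary directions.
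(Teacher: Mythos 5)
Your proof is correct and rests on exactly the same idea as the paper's: translate the divisibility hypothesis into exponent inequalities and let them clash with the opposing strict monotonicities of $(a_i)$ and $(b_i)$ from \eqref{monomials ui}. The paper organizes this as the equivalent statement that $v<v'$ forces $\{f(v),f(v')\}$ to be an antichain in $\mathcal{M}$ (deriving both strict inequalities at once, so neither monomial divides the other), whereas you argue by contradiction and split into the cases $v<v'$ and $v'<v$, using one coordinate per case; the difference is purely organizational.
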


Equivalently, \emph{if $v < v'$, then $\{f(v),f(v')\}$ is an antichain in $\mathcal{M}$.}

\begin{proof} Set $v=(i,j), \, v'=(k,l)$, and assume $v < v'$. Then $i \le k$ and $j \le l$, and at least one of these inequalities is strict. We have
$f(v)=u_iu_j=x^{a_i+a_j}y^{b_i+b_j}$. Similarly, $f(v')= x^{a_k+a_l}y^{b_k+b_l}$. Since the $a_t$ are decreasing and the $b_t$ are increasing, and since
either $i<k$ or $j<l$, we have
\begin{eqnarray*}
a_i+a_j & > & a_k+b_l, \\
b_i+b_j & < & b_k+b_l.
\end{eqnarray*}
Therefore, neither $u_iu_j$ divides $u_ku_l$, nor conversely.
\end{proof}

\begin{Remark}\label{noncomparable}
If $v,v' \in V$, then $v,v'$ are noncomparable if and only if
$$
\min v < \min v' \le \max v' < \max v \,\, \textrm{ or }\,\,  \min v' < \min v \le \max v < \max v'.
$$
\end{Remark}
Picturing any pair $w=(i,j) \in V$ as an edge joining the vertices $i, j \in \NN$, it is useful to keep in mind that graphically, noncomparable pairs $v,v'
\in V$ look like this:

\medskip
\begin{center}
\begin{tikzpicture}[xscale=1.5]
\draw [very thick] (0,0) arc [radius=2, start angle=10, end angle=170];
\node at (-2,1.7) [above] {$v$};
\draw [very thick] (-1,0) arc [radius=1, start angle=10, end angle=170];
\node at (-2,0.8) [below] {$v'$};
\end{tikzpicture}
\end{center}

\bigskip

For instance, if $v=(1,3)$, the only element of $V$ which is noncomparable to $v$ is $v'=(2,2)$, as is graphically obvious. Therefore by
Lemma~\ref{antichain}, if $u_1u_3 \notin \mathcal{G}(I^2)$, then the only element of $\mathcal{G}(I^2)$ dividing $u_1u_3$ must be $u_2^2$.

Of course, noncomparability is not a transitive relation. For instance, the only elements of $V$ which are noncomparable to $v'=(2,2)$ are all $v'' \in V$
of the form $v''=(1,j)$ with $j \in [3,m]$. Therefore, if $u_2^2 \notin \mathcal{G}(I^2)$, any minimal generator of $I^2$ dividing it must be of the form
$u_1u_j$ for some \textit{a priori} unspecified $j \in [3,m]$.

\medskip
We shall also need the next lemma.
\begin{Lemma}\label{interval}
Let $v,v_1,v_2 \in V$. Assume that $v_1 \le v_2$ and that $f(v)$ divides both $f(v_1), f(v_2)$. Then $f(v)$ divides $f(v')$ for all $v' \in V$ such that
$v_ 1\le v' \le v_2$.
\end{Lemma}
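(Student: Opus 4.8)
The plan is to reduce divisibility to a pair of scalar inequalities on exponents, and then exploit the monotonicity built into the sequences $(a_t)$ and $(b_t)$. For a pair $w=(p,q)\in V$, write $f(w)=x^{A(w)}y^{B(w)}$, where $A(w)=a_p+a_q$ records the $x$-exponent and $B(w)=b_p+b_q$ the $y$-exponent. Then for any $w,w'\in V$, the monomial $f(w)$ divides $f(w')$ if and only if $A(w)\le A(w')$ and $B(w)\le B(w')$. So the entire statement becomes a claim about these two coordinate functions.

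The key observation I would record first is that $A$ and $B$ behave oppositely along the order of $V$. Since the $a_t$ are strictly decreasing, $A$ is order-reversing: if $w\le w'$ in $V$ then $A(w)\ge A(w')$. Since the $b_t$ are strictly increasing, $B$ is order-preserving: if $w\le w'$ then $B(w)\le B(w')$. Both facts follow at once by adding the two componentwise inequalities on exponents supplied by $w\le w'$.

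Now take any $v'\in V$ with $v_1\le v'\le v_2$. Applying the order-reversal of $A$ to $v'\le v_2$ gives $A(v_2)\le A(v')$, while the hypothesis $f(v)\mid f(v_2)$ gives $A(v)\le A(v_2)$; chaining these yields $A(v)\le A(v')$. Symmetrically, applying the order-preservation of $B$ to $v_1\le v'$ gives $B(v_1)\le B(v')$, while the hypothesis $f(v)\mid f(v_1)$ gives $B(v)\le B(v_1)$; chaining yields $B(v)\le B(v')$. Together these two inequalities say precisely that $f(v)$ divides $f(v')$.

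There is no genuinely hard step here; the only thing one must be careful about is the bookkeeping of which endpoint controls which exponent. The subtlety is that the lower endpoint $v_1$ bounds the $y$-exponent from below (because $B$ increases along $V$), whereas the upper endpoint $v_2$ bounds the $x$-exponent from below (because $A$ decreases along $V$), so the two hypotheses $f(v)\mid f(v_1)$ and $f(v)\mid f(v_2)$ are genuinely used in an asymmetric way. Once the monotonicity of $A$ and $B$ is in hand, the conclusion is immediate.
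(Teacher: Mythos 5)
Your proof is correct and follows essentially the same route as the paper: the paper chains exactly the same inequalities, $a_i+a_j \le a_{i_2}+a_{j_2} \le a_r+a_s$ and $b_i+b_j \le b_{i_1}+b_{j_1} \le b_r+b_s$, using the upper endpoint for the $x$-exponents and the lower endpoint for the $y$-exponents. Your packaging of this via the order-reversing function $A$ and order-preserving function $B$ is just cleaner notation for the identical argument.
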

\begin{proof} Set $v=(i,j), v_1=(i_1,j_1), v_2=(i_2,j_2), v'=(r,s)$. The hypotheses, together with the respective monotonicity of the $a_t$ and $b_t$,
imply
\begin{eqnarray*}
a_i+a_j & \le & a_{i_2}+a_{j_2}, \\
b_i+b_j & \le & b_{i_1}+b_{j_1}.
\end{eqnarray*}
Since $i_1 \le r \le i_2$ and $j_1 \le s \le j_2$, it follows that
\begin{eqnarray*}
a_{i_2}+a_{j_2} & \le & a_{r}+a_{s}, \\
b_{i_1}+b_{j_1} & \le & b_{r}+b_{s}.
\end{eqnarray*}
Thus $a_i+a_j \le a_{r}+a_{s}$ and $b_i+b_j \le b_{r}+b_{s}$, whence $f(v)$ divides $f(v')$.
\end{proof}

\section{Conditions for tiny squares}
We now give conditions on a monomial ideal $I=(u_1,\dots,u_m)$ in $K[x,y]$ which will force $\mu(I^2)$ to be a small constant.

\begin{Theorem}\label{conditions} Let $m \ge 5$. Let $I =(u_1,\dots,u_m) \subset K[x,y]$ be a monomial ideal with $u_i =x^{a_i}y^{b_i}$ for all $i$, where
$a_1> \dots > a_m$ and $b_1 < \dots < b_m$. Assume that the following divisibility conditions hold:
\begin{eqnarray}
u_1 u_m &|& u_2 u_{m-1} \label{1,m} \\
u_1 u_{m-1} & | & u_2 u_3, \, u_{m-2}^2 \label{1,m-1} \\
u_2^2 & | & u_1 u_3, \, u_1 u_{m-2} \label{2,2} \\
u_2 u_m & | & u_3 u_{m-1}, \, u_{m-2} u_{m-1} \label{2,m} \\
u_{m-1}^2 & | & u_3u_m, \, u_{m-2}u_m \label{m-1,m-1}.
\end{eqnarray}
Then $\mu(I^2)=9$. More precisely, $I^2$ is minimally generated by the set
$$
G=\{u_1^2, u_1u_2, u_2^2\} \cup \{u_1u_{m-1}, u_1u_{m}, u_2u_{m}\} \cup \{u_{m-1}^2, u_{m-1}u_m, u_m^2\}.
$$
\end{Theorem}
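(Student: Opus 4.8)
The plan is to prove the two inclusions $G \subseteq \mathcal{G}(I^2)$ and $\mathcal{G}(I^2) \subseteq G$ simultaneously, by establishing (i) that $G$ generates $I^2$ and (ii) that $G$ is an antichain under divisibility in $\mathcal{M}$. Since $G \subseteq f(V)$ and $f(V)$ generates $I^2$, once (i) holds every monomial of $I^2$ is a multiple of some element of $G$, whence $\mathcal{G}(I^2) \subseteq G$; and (ii) then forces $\mathcal{G}(I^2)=G$, giving $\mu(I^2)=9$.

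For (i) I would show that each product $f(i,j)=u_iu_j$ with $(i,j)\in V$ is divisible by some element of $G$. The nine index pairs $(1,1),(1,2),(2,2),(1,m-1),(1,m),(2,m),(m-1,m-1),(m-1,m),(m,m)$ are precisely those of the elements of $G$, so their products need no argument. Every other cell of $V$ is covered in a rectangular block obtained by feeding a divisibility hypothesis into Lemma~\ref{interval}: from \eqref{2,2}, with endpoints $(1,3)$ and $(1,m-2)$, one gets $u_2^2 \mid f(1,s)$ for all $s\in[3,m-2]$; from \eqref{1,m-1}, with endpoints $(2,3)$ and $(m-2,m-2)$, one gets $u_1u_{m-1}\mid f(r,s)$ for all $(r,s)$ with $2\le r\le s\le m-2$ and $s\ge 3$; from \eqref{2,m}, with endpoints $(3,m-1)$ and $(m-2,m-1)$, one gets $u_2u_m\mid f(r,m-1)$ for $r\in[3,m-2]$; and from \eqref{m-1,m-1}, with endpoints $(3,m)$ and $(m-2,m)$, one gets $u_{m-1}^2\mid f(r,m)$ for $r\in[3,m-2]$. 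The single leftover cell $(2,m-1)$ is handled separately by \eqref{1,m}, which says exactly $u_1u_m\mid f(2,m-1)$. I would then scan $V$ by first coordinate and check that these blocks together with the nine corner cells exhaust $V$.

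For (ii) I would first apply Remark~\ref{noncomparable} to the nine index pairs and observe that only four of them are noncomparable in $V$, namely $\{(2,2),(1,m-1)\}$, $\{(2,2),(1,m)\}$, $\{(1,m),(m-1,m-1)\}$ and $\{(2,m),(m-1,m-1)\}$; all remaining pairs are comparable and hence give antichains in $\mathcal{M}$ by Lemma~\ref{antichain}. For each of the four I would rule out both possible divisibilities by comparing exponent sums, each time combining one hypothesis with the strict monotonicity of the $a_t$ and $b_t$. For instance $u_2^2\nmid u_1u_{m-1}$ because \eqref{1,m-1} gives $a_1+a_{m-1}\le a_2+a_3<2a_2$, while $u_1u_{m-1}\nmid u_2^2$ because \eqref{2,2} gives $2b_2\le b_1+b_3<b_1+b_{m-1}$; the other three pairs are disposed of in the same style, using \eqref{1,m}, \eqref{2,m} and \eqref{m-1,m-1} as appropriate.

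The main obstacle is the combinatorial bookkeeping in part (i): one must choose, for each hypothesis, the correct pair of extreme cells so that Lemma~\ref{interval} yields a clean rectangular block, and then verify that these blocks tile $V$ with no gaps, several of them becoming degenerate for small $m$ (e.g. $m=5$, where $[3,m-2]$ reduces to a point). Because the five hypotheses are not symmetric under the reflection $i\mapsto m+1-i$, the work cannot simply be halved by symmetry, so both the exhaustiveness check in (i) and the four noncomparable pairs in (ii) must be verified explicitly.
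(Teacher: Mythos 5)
Your proposal is correct, and its first half is the paper's own argument: part (i) uses the same four applications of Lemma~\ref{interval} with the same endpoint cells, plus \eqref{1,m} for the lone cell $(2,m-1)$, and the same scan of $V$. Where you genuinely diverge is part (ii). Your identification of the four noncomparable index pairs $\{(2,2),(1,m-1)\}$, $\{(2,2),(1,m)\}$, $\{(1,m),(m-1,m-1)\}$, $\{(2,m),(m-1,m-1)\}$ is exactly right, and all eight required non-divisibilities do fall to the style of computation you sketch; for instance $u_2^2 \nmid u_1u_m$ because \eqref{1,m} gives $2a_2 \le a_1+a_m \le a_2+a_{m-1}$, forcing the false inequality $a_2 \le a_{m-1}$. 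The paper avoids most of this arithmetic by a trick: replacing $(2,2)$ by $(1,3)$ and $(m-1,m-1)$ by $(m-2,m)$ turns the nine index pairs into a chain in $V$, so Lemma~\ref{antichain} gives for free that $G'=G\cup\{u_1u_3,u_{m-2}u_m\}\setminus\{u_2^2,u_{m-1}^2\}$ is an antichain; it then argues \emph{indirectly} that $u_2^2$ and $u_{m-1}^2$ must belong to $\mathcal{G}(I^2)$, since each is the only element of $G$ that can divide $u_1u_3$, resp.\ $u_{m-2}u_m$, and it needs the hypotheses only for four exclusions (that $u_2^2$ divides neither $u_1u_{m-1}$ nor $u_1u_m$, and symmetrically for $u_{m-1}^2$), obtained by transitivity rather than exponent arithmetic: $u_2^2 \mid u_1u_{m-1} \mid u_2u_3$ would give $u_2 \mid u_3$, impossible for distinct minimal generators. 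So the paper trades your eight mechanical checks for four, at the price of a less direct argument; your version is more computational but entirely routine. One small correction to your closing remark: while the full hypothesis set \eqref{1,m}--\eqref{m-1,m-1} is indeed not invariant under the reflection $i \mapsto m+1-i$, $x \leftrightarrow y$ (the mirror of $u_1u_{m-1} \mid u_{m-2}^2$ is not a hypothesis), the particular sub-hypotheses your eight checks invoke \emph{are} closed under that reflection, so your four pairs split into two mirror pairs and the work in (ii), unlike in (i), can in fact be halved by symmetry.
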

\begin{proof} To see that $G$ generates $I^2$, we must show that, for all $i,j \in [1,m]$ such that $i \le j$, the monomial $u_iu_j$ is a multiple of some
element in $G$ . We distinguish several cases.

\smallskip
\textbf{Case $i=1.$} If $j \in \{1,2,m-1,m\}$, we are done since the corresponding monomial $u_1u_j$ belongs to $G$. Assume now $j \in [3,m-2]$. Then
$(1,3) \le (1,j) \le (1,m-2)$, and since $u_2^2$ divides both $u_1 u_3$ and $u_1 u_{m-2}$ by \eqref{2,2}, it also divides $u_1u_j$ by Lemma~\ref{interval}.

\smallskip
\textbf{Case $i,j \in [2,m-2].$} If $(i,j) = (2,2)$, we are done since $u_2^2 \in G$. If $(i,j) \not= (2,2)$, then $(2,3) \le (i,j) \le (m-2,m-2)$, and
since $u_1u_{m-1}$ divides both $u_2 u_3$ and $u_{m-2}^2$ by \eqref{1,m-1}, it follows from Lemma~\ref{interval} that $u_1u_{m-1}$ also divides $u_iu_j$.

\smallskip
\textbf{Case $j=m-1$.} If $i=2$, then  by \eqref{1,m}, $u_2u_{m-1}$ is divisible by $u_1u_m$ and $u_1u_m \in G$. If $i \in [3,m-2]$, then $(3,m-1) \le
(i,m-1) \le (m-2,m-1)$, and since $u_2u_m$ divides both $u_3 u_{m-1}$ and $u_{m-2} u_{m-1}$ by \eqref{2,m}, it also divides $u_iu_{m-1}$ by
Lemma~\ref{interval}. Finally, if $i=m-1$, we are done since $u_{m-1}^2 \in G$.

\smallskip
\textbf{Case $j=m.$} If $i=2$, we are done since $u_2u_{m} \in G$. If $i \in [3,m-2]$, then $(3,m) \le (i,m) \le (m-2,m)$, and since $u_{m-1}^2$ divides
both $u_3 u_{m}$ and $u_{m-2} u_{m}$ by \eqref{m-1,m-1}, it also divides $u_iu_{m}$ by Lemma~\ref{interval}. Finally, if $i \in \{m-1,m\}$, we are done
since $u_{m-1}u_m, u_m^2 \in G$.

\smallskip
We conclude that $G$ generates $I^2$. To establish the equality $G=\mathcal{G}(I^2)$, it remains to see that $G$ is an antichain. Let $A \subset V$ consist
of nine pairs $(i,j)$ such that $G=\{u_iu_j \mid (i,j) \in A\}$, for instance
$$
A = \{(1,1),(1,2),(2,2),(1,m-1),(1,m),(2,m),(m-1,m-1),(m-1,m),(m,m)\}.
$$
Replacing $(2,2)$ by $(1,3)$ and $(m-1,m-1)$ by $(m-2,m)$ yields a chain in $V$, namely
\begin{eqnarray*}
& & (1,1)<(1,2)<(1,3)<  \\
& & (1,m-1)<(1,m)<(2,m)< \\
& & (m-2,m)<(m-1,m)<(m,m).
\end{eqnarray*}
It follows from Lemma~\ref{antichain} that the set
$$G'=G\cup\{u_1u_3,u_{m-2}u_{m}\}\setminus \{u_2^2,u_{m-1}^2\}$$
is an antichain. In particular, its subset $G\setminus \{u_2^2,u_{m-1}^2\}$ also is. Consider now $u_1u_3$. Since $G$ generates $I^2$, then $u_1u_3$ is a
multiple of some element of $G$. Moreover, since $G'$ is an antichain, the only possible factors of $u_1u_3$ in $G$ are $u_2^2$ and $u_{m-1}^2$. But since
$(1,3) < (m-1,m-1)$, Lemma~\ref{antichain} implies that $\{u_1u_3,u_{m-1}^2\}$ is an antichain. Therefore, the only factor of $u_1u_3$ in $G$ is $u_2^2$.
Hence, since any monomial in $I^2$ has at least one factor in $G \cap \mathcal{G}(I^2)$, it follows that $u_2^2 \in \mathcal{G}(I^2)$.

Let us now see that $u_2^2$ cannot divide another member of $G$. By Lemma~\ref{antichain}, the only possible multiples of $u_2^2$ in $G$ would be
$u_1u_{m-1}$ and $u_1u_m$. But neither possibility arises, since $u_1u_{m-1}$ divides $u_2u_3$ by \eqref{1,m-1}, and $u_1u_m$ divides $u_2u_{m-1}$ by
\eqref{1,m}.

Entirely symmetric arguments apply to $u_{m-1}^2$. We conclude that $G = \mathcal{G}(I^2)$, as desired.
\end{proof}

\section{An explicit construction}
We now show that the conditions of Theorem~\ref{conditions} ensuring $\mu(I^2)=9$ are realizable.
\begin{Proposition}\label{crazy ideal} Let $m\ge 5$. Let $I=(u_1,\dots,u_m)$, where $u_i=x^{a_i}y^{b_i}$ with exponents $a_i,b_i$ defined as follows:
\begin{eqnarray*}
(a_1,\dots,a_m) & = & (5m, 4m, 4m-1, \dots,3m+4,m,0), \\
(b_1,\dots,b_m) & = & (a_m,\dots,a_1).
\end{eqnarray*}
Then $\mu(I)=m$ and $\mu(I^2)=9$.
\end{Proposition}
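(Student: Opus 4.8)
The plan is to derive the proposition directly from Theorem~\ref{conditions}. First I would confirm from the given formula that the exponent sequences are strictly monotone: reading off the three regimes of the $a$-sequence (the initial values $5m, 4m$, the unit-step run $4m-1, \dots, 3m+4$, and the final drop to $m, 0$) one gets $a_1 > a_2 > \dots > a_m$, and since $b_i = a_{m+1-i}$ the sequence $(b_i)$ is then strictly increasing. Consequently no $u_i$ divides another, so $\mathcal{G}(I) = \{u_1, \dots, u_m\}$ and $\mu(I) = m$. It then remains to show $\mu(I^2) = 9$, for which it suffices to verify that $I$ satisfies the five divisibility hypotheses \eqref{1,m}--\eqref{m-1,m-1} of Theorem~\ref{conditions}.

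Since $u_iu_j = x^{a_i + a_j}y^{b_i + b_j}$, each divisibility $u_iu_j \mid u_ku_l$ reduces to the two numerical inequalities $a_i + a_j \le a_k + a_l$ and $b_i + b_j \le b_k + b_l$. Only the indices $1,2,3,m-2,m-1,m$ occur in the hypotheses, so I need only the six values $a_1 = 5m$, $a_2 = 4m$, $a_3 = 4m-1$, $a_{m-2} = 3m+4$, $a_{m-1} = m$, $a_m = 0$, together with $b_1 = 0$, $b_2 = m$, $b_3 = 3m+4$, $b_{m-2} = 4m-1$, $b_{m-1} = 4m$, $b_m = 5m$ coming from $b_i = a_{m+1-i}$. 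In particular the general middle entries of the sequence never enter the divisibility checks.

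Next I would exploit the symmetry $b_i = a_{m+1-i}$, i.e. the exponent swap $x \leftrightarrow y$ combined with the index reversal $i \mapsto m+1-i$, an involution preserving divisibility. It fixes condition \eqref{1,m}, interchanges conditions \eqref{2,2} and \eqref{m-1,m-1}, and identifies one relation of \eqref{1,m-1} with one relation of \eqref{2,m}; this trims the workload, and the relations not caught by the symmetry are checked directly. Each such check collapses to an elementary linear inequality in $m$: for instance \eqref{1,m} is in fact an equality, $a_1 + a_m = 5m = a_2 + a_{m-1}$ and $b_1 + b_m = 5m = b_2 + b_{m-1}$, while the first relation of \eqref{2,2} reads $2a_2 = 8m \le 9m-1 = a_1 + a_3$ and $2b_2 = 2m \le 3m+4 = b_1 + b_3$. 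All the resulting inequalities hold for every $m \ge 5$ (indeed for all $m \ge 1$), so Theorem~\ref{conditions} applies and gives $\mu(I^2) = 9$.

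The computations are routine; the only point demanding care is the boundary case $m = 5$, where $m-2 = 3$ and several of the relevant indices coincide. There I would note that the closed forms above remain internally consistent (for example $a_3 = 4m-1$ and $a_{m-2} = 3m+4$ agree at $m=5$), so the same inequalities continue to validate each hypothesis of Theorem~\ref{conditions} without modification. No genuine obstacle arises beyond this bookkeeping.
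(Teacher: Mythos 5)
Your proposal is correct and takes essentially the same route as the paper: establish $\mu(I)=m$ from the strict monotonicity of the exponent sequences (so the $u_i$ form an antichain), then verify the five divisibility hypotheses of Theorem~\ref{conditions} by direct computation of the relevant exponents, which is exactly what the paper does by tabulating the fifteen monomials involved. Your use of the $x \leftrightarrow y$, $i \mapsto m+1-i$ symmetry and your explicit attention to the $m=5$ boundary are minor refinements of the same verification, not a different argument.
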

\begin{proof} Since the $a_i$ are decreasing and the $b_i$ are increasing, the $u_i$ constitute an antichain and hence a minimal system of generators of
$I$. In order to show $\mu(I^2)=9$, it suffices to prove that the divisibility conditions of Theorem~\ref{conditions} are met. This is straightforward. For
convenience, here are the monomials involved:
$$
\begin{array}{lll}
u_1 u_m  =  x^{5m}y^{5m} &  u_2 u_{m-1}  =  x^{5m}y^{5m} & \\
u_1 u_{m-1}  =  x^{6m}y^{4m} &  u_2 u_3  =  x^{8m-1}y^{4m+4}  & u_{m-2}^2  =  x^{6m+8}y^{8m-2}  \\
u_2^2  =  x^{8m}y^{2m} & u_1 u_3  =  x^{9m-1}y^{3m+4} & u_1 u_{m-2}  =  x^{8m+4}y^{4m-1} \\
u_2 u_m  =  x^{4m}y^{6m} &  u_3 u_{m-1}  =  x^{5m-1}y^{7m+4} & u_{m-2} u_{m-1}  =  x^{4m+4}y^{8m-1} \\
u_{m-1}^2  =  x^{2m}y^{8m} & u_3u_m  =  x^{4m-1}y^{8m+4} & u_{m-2}u_m  =  x^{3m+4}y^{9m-1}.
\end{array}
$$
\end{proof}

\medskip
Let us now look at the degree distribution of $\mathcal{G}(I)$ and of $\mathcal{G}(I^2)$. The generators $u_i$ of $I$ are of degree $a_i+b_i=a_i+a_{m+1-i}$
for all $i \in [1,m]$. For $m \ge 5$, we find
$$
\begin{array}{rcl}
\deg(u_1)=\deg(u_2)=\deg(u_{m-1})=\deg(u_m) & = & 5m, \\
\deg(u_i)=\deg(u_{m+1-i}) & = & 7m+3
\end{array}
$$
for all $3 \le i \le m-2$. That is, the ideal $I$ is generated in two degrees only. For $I^2$ the situation is even simpler. It is generated in the single
degree $10m$, the common degree of its nine minimal generators.

\begin{Example}
Here is the case $m=10$ of Proposition~\ref{crazy ideal}. We have
$$
I = (x^{50}, \, x^{40}y^{10}, \, x^{39}y^{34}, \, x^{38}y^{35}, \, x^{37}y^{36}, \, x^{36}y^{37}, \, x^{35}y^{38}, \, x^{34}y^{39}, \, x^{10}y^{40}, \,
y^{50})
$$
and
$$
I^2 = (x^{100}, \, x^{90}y^{10}, \, x^{80}y^{20}, \, x^{60}y^{40}, \, x^{50}y^{50}, \, x^{40}y^{60}, \, x^{20}y^{80}, \, x^{10}y^{90}, \, y^{100}).
$$
\end{Example}

 \medskip
 To conclude this section, let us show that in contrast, if $I$ is generated in a single degree, then $\mu(I^2)$ grows to infinity with $\mu(I)$.

\begin{Proposition}\label{single degree} Let $I \subset K[x,y]$ be a monomial ideal generated in a single degree. If $\mu(I)=m$ then $\mu(I^2) \ge 2m-1$.
\end{Proposition}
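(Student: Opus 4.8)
The plan is to reduce the assertion to an elementary lower bound on the number of pairwise sums of a set of integers. First I would record the consequence of the single-degree hypothesis: writing $u_i = x^{a_i}y^{b_i}$ with $a_1 > a_2 > \dots > a_m$, the assumption that all $u_i$ share a common degree $d$ forces $a_i + b_i = d$ for every $i$, so each generator is completely determined by its $x$-exponent $a_i$. Consequently every product $u_iu_j = x^{a_i+a_j}y^{b_i+b_j}$ has degree $2d$, and $I^2$ is generated in the single degree $2d$.

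Next I would exploit the simplification afforded by working in a single degree: among monomials of equal degree, one divides another only if they are equal, since $x^py^{2d-p} \mid x^qy^{2d-q}$ forces both $p \le q$ and $2d-p \le 2d-q$, hence $p=q$. Therefore $\mathcal{G}(I^2)$ consists exactly of the \emph{distinct} monomials among the $u_iu_j$, and since each such monomial is determined by its $x$-exponent, $\mu(I^2)$ equals the number of distinct values in the sumset $\{a_i + a_j \mid 1 \le i \le j \le m\}$. (This is also immediate from Lemma~\ref{antichain}, which in a fixed degree says that any two distinct products $f(v), f(v')$ form an antichain, so no nontrivial divisibility relations can shrink the generating set.)

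Finally I would exhibit a strictly increasing chain of length $2m-1$ inside this sumset. Using $a_1 > a_2 > \dots > a_m$, the values
$$
2a_m < a_{m-1}+a_m < \dots < a_1 + a_m < a_1 + a_{m-1} < \dots < 2a_1
$$
are pairwise distinct and all of the form $a_i+a_j$ with $1 \le i \le j \le m$: the first block $a_m+a_m,\, a_{m-1}+a_m,\, \dots,\, a_1+a_m$ contributes $m$ sums, and the continuation $a_1+a_{m-1},\, \dots,\, a_1+a_1$ contributes a further $m-1$, for a total of $2m-1$. This yields $\mu(I^2) \ge 2m-1$.

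Every step here is essentially forced, so there is no serious obstacle; the only point requiring care is the clean reduction to counting distinct pairwise sums, which rests entirely on the degenerate nature of divisibility among monomials of a fixed degree. The final count is precisely the folklore inequality $|A+A| \ge 2|A|-1$ for a finite set $A$ of integers, realized by the single increasing chain above.
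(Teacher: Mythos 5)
Your proof is correct and follows essentially the same route as the paper: both observe that in a single degree divisibility among the products $u_iu_j$ is trivial, so $\mu(I^2)$ equals the number of distinct products, and both exhibit the same witness set $\{u_1u_j\}_{j=1}^{m} \cup \{u_iu_m\}_{i=1}^{m}$ of $2m-1$ distinct elements. Your version merely makes explicit (via the exponent sums $a_i+a_j$ and the sumset inequality $|A+A|\ge 2|A|-1$) the distinctness that the paper leaves implicit.
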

\begin{proof} Set $\mathcal{G}(I)=\{u_1,\dots,u_m\}$, and assume $\deg(u_i)=d$ for all $i$. Since $\deg(u_iu_j)=2d$ for all $i,j$, the distinct $u_iu_j$
form an antichain for divisibility. Hence $\mu(I^2)$ equals the number of pairwise distinct $u_iu_j$. Now that number is at least $2m-1$, as witnessed by
the subset
$$
u_1u_1, \, u_1u_2 , \, \dots , \, u_1u_m , \, u_2u_m , \, \dots , \, u_mu_m.
$$
\end{proof}

\section{Computing $\mu(I^k)$}

For the ideal $I$ satisfying $\mu(I)=m, \mu(I^2)=9$ of the preceding section, we now determine $\mu(I^k)$ for all $k$.

\begin{Proposition}\label{I^k}
Given $m \ge 5$, let $I=(u_1,\dots,u_m)$ be the ideal defined in Proposition \ref{crazy ideal}. For all $k \ge 3$, we have
$\mu(I^k)=5k+1.$
\end{Proposition}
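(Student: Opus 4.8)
The plan is to show that, for $k \ge 3$, the minimal generators of $I^k$ are exactly the products of the four \emph{corner} generators $u_1, u_2, u_{m-1}, u_m$, and then to count them. The starting observation is that these four are precisely the generators of smallest degree: one computes $\deg u_1 = \deg u_2 = \deg u_{m-1} = \deg u_m = 5m$, whereas $\deg u_i = 7m+3$ for every middle index $3 \le i \le m-2$. Consequently any product of $k$ generators has degree $\ge 5mk$, with equality exactly when all $k$ factors are corner generators. Writing a corner product as $u_1^p u_2^q u_{m-1}^r u_m^s$ with $p+q+r+s=k$, its exponent vector is $x^A y^B$ with $A = m(5p+4q+r)$ and $B = m(q+4r+5s)$, so $A+B = 5mk$ is constant. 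Thus all corner products lie on the line $A+B = 5mk$, so distinct ones are pairwise incomparable for divisibility and form an antichain in $\mathcal{M}$.

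First I would count the corner products. Each factor contributes to $A/m$ a value in $\{0,1,4,5\}$ (for $u_m, u_{m-1}, u_2, u_1$ respectively), so distinct corner products correspond to the distinct attainable sums $N = A/m$ of $k$ such values. Writing each summand as $4e+f$ with $e,f \in \{0,1\}$ gives $N = 4E+F$, where $E, F$ independently range over $[0,k]$; hence the attainable $N$ form the union of the integer intervals $[4E, 4E+k]$ for $E \in [0,k]$. Consecutive intervals overlap precisely when $k \ge 3$, in which case their union is all of $[0,5k]$, yielding exactly $5k+1$ distinct corner products. For $k=2$ the intervals leave gaps at $3$ and $7$, recovering $\mu(I^2)=9$ from the previous section — a reassuring consistency check.

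The crux is to prove that these corner products already generate $I^k$, i.e. that \emph{every} product $w = u_{i_1}\cdots u_{i_k} = x^A y^B$ is divisible by some corner product. Since for $k \ge 3$ the corner products are exactly the monomials $x^{mE} y^{m(5k-E)}$ with $E \in [0,5k]$, this amounts to finding an integer $E \in [0,5k]$ with $mE \le A$ and $m(5k-E) \le B$, i.e. an integer in $[\,5k - B/m,\; A/m\,] \cap [0,5k]$. If $A \ge 5mk$ take $E = 5k$ (so $u_1^k \mid w$); if $B \ge 5mk$ take $E = 0$ (so $u_m^k \mid w$). Otherwise $0 < 5k - B/m \le A/m < 5k$, and the relevant interval has length $(A+B)/m - 5k$. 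If $w$ is itself a corner product this is the single integer $A/m$; and if $w$ uses at least one middle factor, then $A+B \ge 5m(k-1)+(7m+3) = 5mk + 2m + 3$, so the interval has length $> 1$ and contains an integer $E \in \{1,\dots,5k-1\}$. In every case a corner product divides $w$.

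Combining these steps, the corner products form a monomial antichain that generates $I^k$; since $\mathcal{G}(I^k)$ is contained in any monomial generating set and an antichain admits no proper divisibility relations, the corner products must equal $\mathcal{G}(I^k)$, giving $\mu(I^k) = 5k+1$ for $k \ge 3$. The step I expect to be the main obstacle is the divisibility argument above: one must ensure both that the target interval for $E$ is long enough to contain an integer and that this integer lies in $[0,5k]$ so that the corresponding corner product actually exists. It is exactly here that the hypothesis $k \ge 3$ (guaranteeing no gaps among the attainable values of $E$) and the degree gap $7m+3 > 5m$ between middle and corner generators are used in tandem.
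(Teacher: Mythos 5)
Your proof is correct, but it takes a genuinely different route from the paper's. Both arguments identify the four corner generators $u_1=x^{5m}$, $u_2=x^{4m}y^m$, $u_{m-1}=x^my^{4m}$, $u_m=y^{5m}$ as the heart of the matter, but the reductions differ. The paper sets $J=(u_1,u_2,u_{m-1},u_m)$ and gets generation for free from Theorem~\ref{conditions}: since $\mathcal{G}(I^2)\subseteq J^2$ one has $I^2=J^2$, which forces $I^k=J^k$ for all $k\ge 3$; after the divisibility-preserving substitution $x\mapsto x^m$, $y\mapsto y^m$, everything reduces to $J_0=(x^5,x^4y,xy^4,y^5)$, for which $J_0^3=(x,y)^{15}$ and hence $J_0^k=(x,y)^{5k}$, giving $\mu=5k+1$. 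You instead prove generation by corner products directly: the degree gap $7m+3>5m$ shows any product involving a middle generator has degree at least $5mk+2m+3$, so the interval $\bigl[5k-B/m,\,A/m\bigr]$ has length greater than $2$ and contains an integer $E$ lying in $[0,5k]$, while your count of sums of $k$ elements of $\{0,1,4,5\}$ shows that every such $E$ is realized by a corner product once $k\ge 3$. That counting step is exactly the paper's assertion $J_0^k=(x,y)^{5k}$ in disguise (the attainable values of $N=A/m$ are the $x$-exponents of the generators of $J_0^k$), but you prove it explicitly where the paper says ``as easily seen''. What each buys: the paper's route is shorter and recycles the $k=2$ theorem, converting the problem once and for all to a four-generator ideal; yours is self-contained (it nowhere uses Theorem~\ref{conditions}), pinpoints exactly where $k\ge 3$ and the degree gap enter, and the $k=2$ sanity check recovering $\mu(I^2)=9$ falls out of the same computation. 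One wording nit: for $k=3$ the consecutive intervals $[4E,4E+3]$ and $[4E+4,4E+7]$ do not literally overlap --- they are merely adjacent as integer intervals --- but adjacency is all that is needed for the union to be $[0,5k]$, so your conclusion stands.
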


\begin{proof} It follows from Theorem~\ref{conditions} that $I^2$ is minimally generated by
$$
G=\{u_1^2, u_1u_2, u_2^2\} \cup \{u_1u_{m-1}, u_1u_{m}, u_2u_{m}\} \cup \{u_{m-1}^2, u_{m-1}u_m, u_m^2\}.
$$
Let $J=(u_1,u_2,u_{m-1},u_m)$ and $H=(u_3,\dots,u_{m-2})$, so that $I=J+H$. Then $I^2=J^2$, since $I^2 = (G) \subseteq J^2 \subseteq I^2$. Hence $IJ, IH \subseteq J^2$, from which it follows that $I^k=J^k$ for all $k \ge 3$. 

Now, by construction in Proposition~\ref{crazy ideal}, we have $J=(x^{5m},x^{4m}y^m,x^my^{4m},y^{5m})$. Let $J_0=(x^{5},x^{4}y,xy^{4},y^{5})$. Then $\mu(J^k)=\mu(J_0^k)$ for all $k \ge 1$. Moreover, we have 
$$J_0^3=(x^{15}, x^{14}y,\dots,xy^{14},y^{15})$$
as easily seen. It follows that $J_0^k=(x,y)^{5k}$ for all $k\ge 3$. Therefore
$$
\mu(I^k)=\mu(J^k)=\mu(J_0^k)=5k+1
$$
for all $k \ge 3$, as stated.
\end{proof}

\begin{Corollary}
For any integer $k_0\geq2$, there exists a monomial ideal $I$ in $K[x,y]$ such that $\mu(I^k)<\mu(I)$ for all $k$ such that $2 \le k \le k_0$.
\end{Corollary}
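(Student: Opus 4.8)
The plan is to reuse the single family of ideals already constructed, namely the ideals $I = (u_1, \dots, u_m)$ of Proposition~\ref{crazy ideal}, and simply to choose the parameter $m$ large enough in terms of $k_0$. The point is that for this family we have complete control of $\mu(I^k)$ for every $k$: Proposition~\ref{crazy ideal} gives $\mu(I) = m$ and $\mu(I^2) = 9$, while Proposition~\ref{I^k} gives $\mu(I^k) = 5k+1$ for all $k \ge 3$.

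With these values in hand, I would analyze the function $k \mapsto \mu(I^k)$ on the integer interval $[2, k_0]$. On $[3, k_0]$ it equals $5k+1$, which is strictly increasing, so its maximum over that range is $5k_0 + 1$; together with the value $9$ at $k = 2$, the maximum of $\mu(I^k)$ over all $k \in [2, k_0]$ is $\max(9, 5k_0 + 1)$. Since $5k_0 + 1 \ge 16 > 9$ as soon as $k_0 \ge 3$, this maximum equals $9$ when $k_0 = 2$ and equals $5k_0 + 1$ when $k_0 \ge 3$.

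It therefore suffices to pick $m$ strictly larger than this maximum. Concretely I would set $m = 5k_0 + 2$. This satisfies $m \ge 12 \ge 5$, so Proposition~\ref{crazy ideal} applies and the ideal $I$ is well defined with $\mu(I) = m$. For $k = 2$ one has $\mu(I^2) = 9 < 12 \le m$, and for $3 \le k \le k_0$ one has $\mu(I^k) = 5k+1 \le 5k_0 + 1 < 5k_0 + 2 = m$. Hence $\mu(I^k) < \mu(I)$ for every $k$ with $2 \le k \le k_0$, as required.

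There is really no substantive obstacle here: all the difficulty has already been absorbed into Propositions~\ref{crazy ideal} and~\ref{I^k}. The only point requiring a moment's care is the bookkeeping for the boundary case $k_0 = 2$, where the relevant maximum is the constant $9$ coming from $\mu(I^2)$ rather than the linear term $5k_0 + 1$; choosing $m = 5k_0 + 2$ covers both cases uniformly and keeps $m$ in the admissible range $m \ge 5$.
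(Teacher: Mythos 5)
Your proof is correct and follows essentially the same route as the paper: both choose $m$ larger than $5k_0+1$ (you take $m=5k_0+2$, the paper takes any $m>5k_0+1$) and invoke Propositions~\ref{crazy ideal} and~\ref{I^k} to bound $\mu(I^k)$ on $[2,k_0]$. The only cosmetic difference is that the paper absorbs the case $k=2$ into the uniform bound $\mu(I^k)\le 5k+1$ (valid since $9\le 11$), whereas you treat $k=2$ and $k\ge 3$ separately; both are fine.
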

\begin{proof} Let $m > 5k_0+1$. By Propositions~\ref{crazy ideal} and \ref{I^k}, there is an ideal $I$ with $\mu(I)=m > 5k_0+1$ and satisfying $\mu(I^k)\le 5k+1$ for all $k \ge 2$. Therefore $\mu(I^k)<\mu(I)$ for all $2 \le k \le k_0$.
\end{proof}

\section{Optimality}

 We now prove that Theorem~\ref{conditions} is best possible, in the sense that $\mu(I^2)$ is bounded below by $9$ if $\mu(I) \ge 6$. For that we need some
 more notation. If $I \subset K[x,y]$ is a monomial ideal with $\mathcal{G}(I)=\{u_1,\dots,u_m\}$, let us denote
 $$
 \mathcal{G}^2(I)=\{u_iu_j \mid 1 \le i \le j \le m\},
 $$
 i.e. $\mathcal{G}^2(I)=f(V)$ in the notation of the Introduction. Let
$$\gamma \colon \mathcal{G}^2(I) \to \mathcal{G}(I^2)$$ be the map defined, for all $u_iu_j \in \mathcal{G}^2(I)$, by
\begin{equation}\label{gamma}
\gamma(u_iu_j)=u_ku_l,
\end{equation}
where $u_ku_l \in \mathcal{G}(I^2)$ is the \emph{lexicographically first} minimal monomial generator of $I^2$ dividing $u_iu_j$. Note that $\gamma \circ
\gamma = \gamma$. Finally, given $u_iu_j \in \mathcal{G}^2(I)$, we denote
 $$
 \div(u_iu_j) = \{u_ru_s \in \mathcal{G}^2(I) \mid u_ru_s \textrm{ divides } u_iu_j\}.
 $$
Of course $\gamma(u_iu_j) \in \div(u_iu_j)$. We will repeatedly use the following reformulation of Lemma~\ref{antichain}.

 \begin{Lemma}\label{div} For all $(i,j) \in V$, we have
$$
 \div(u_iu_j) \subseteq \{u_iu_j\} \cup \{u_ru_s \in \mathcal{G}^2(I) \mid (r,s) \textrm{ is noncomparable to } (i,j) \textrm{ in } V\}.
$$
 \end{Lemma}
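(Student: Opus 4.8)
The plan is to read off this inclusion directly from Lemma~\ref{antichain}, of which it is essentially a restatement in the language of $\div(\,\cdot\,)$. First I would fix $(i,j) \in V$ and take an arbitrary element $u_ru_s \in \div(u_iu_j)$. By the definition of $\div$, this means that $(r,s) \in V$ and that the monomial $f((r,s)) = u_ru_s$ divides $f((i,j)) = u_iu_j$. Thus the hypothesis of Lemma~\ref{antichain} is met with $v = (r,s)$ and $v' = (i,j)$.

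Next I would invoke Lemma~\ref{antichain} on this pair. Since $f(v)$ divides $f(v')$, the lemma yields exactly two alternatives. Either $v = v'$, i.e.\ $(r,s) = (i,j)$, in which case $u_ru_s = u_iu_j$ lies in the singleton $\{u_iu_j\}$; or $\{v,v'\}$ is an antichain in $V$, i.e.\ $(r,s)$ is noncomparable to $(i,j)$, in which case $u_ru_s$ lies in the second set on the right-hand side. In both cases $u_ru_s$ belongs to the asserted union, which establishes the inclusion. Since $u_ru_s \in \div(u_iu_j)$ was arbitrary, the inclusion holds for all $(i,j) \in V$.

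I do not anticipate a genuine obstacle here, as the argument is a one-line application of the previous lemma. The only point worth tracking is the mild asymmetry between elements of $\div(u_iu_j)$, which are recorded as monomials $u_ru_s$, and the pairs $(r,s) \in V$ to which Lemma~\ref{antichain} actually applies. Because every element of $\mathcal{G}^2(I)$ arises as $f((r,s))$ for at least one witnessing pair $(r,s) \in V$, this causes no difficulty: I would simply fix such a witness before applying Lemma~\ref{antichain}, and the right-hand side of the desired inclusion is phrased so as to be insensitive to the particular choice of witness.
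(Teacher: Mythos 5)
Your proposal is correct and is essentially identical to the paper's own (one-line) proof: both simply apply Lemma~\ref{antichain} to a divisor $u_ru_s$ of $u_iu_j$, concluding that either $(r,s)=(i,j)$ or the two pairs are noncomparable in $V$. Your extra remark about fixing a witnessing pair $(r,s)$ for the monomial $u_ru_s$ is a reasonable precaution (since $f$ need not be injective) but changes nothing of substance.
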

 \begin{proof} As stated in Lemma~\ref{antichain}, if $u_ru_s$ divides $u_iu_j$ and $(r,s) \not= (i,j)$, then $\{(r,s),(i,j)\}$ is an antichain in $V$.
 \end{proof}

Thus for instance, $\div(u_1u_3) \subseteq \{u_1u_3,u_2^2\}$ as already observed after Remark~\ref{noncomparable}. Again, it is useful to keep in mind the
picture representing noncomparable pairs following that remark.

\smallskip
Here is the main result of this section.

 \begin{Theorem}\label{ge nine} Let $I\subset K[x,y]$ be a monomial ideal. If $\mu(I) \ge 6$ then $\mu(I^2) \ge 9$.
 \end{Theorem}

  \begin{proof}{} Let $m=\mu(I)$. As before, we denote $\mathcal{G}(I) = \{u_1,\dots, u_m\}$, where $u_i=x^{a_i}y^{b_i}$ for all $i$ and where the
  $a_i,b_i$ are decreasing and increasing, respectively. We start by considering the following six monomials in $\mathcal{G}^2(I)$:
  $$
  u_1^2,u_1u_2,u_1u_3,u_{m-2}u_{m},u_{m-1}u_{m},u_m^2.
  $$

 By Lemma~\ref{div}, we have $\gamma(u_1^2)=u_1^2$ since $(1,1)$ is comparable to every $(i,j) \in V$, namely $(1,1) \le (i,j)$. Similarly, each one of the
 pairs $(1,2), (m-1,m), (m,m)$ is comparable to all elements of $V$, whence
 $$\gamma(u_1u_2)=u_1u_2, \,\, \gamma(u_{m-1}u_m)=u_{m-1}u_m, \,\, \gamma(u_m^2)=u_m^2.
 $$
Consider $u_1u_3$. Then
 $$\gamma(u_1u_3) \in \divs{u_1u_3} \subseteq \{u_1u_3,u_2^2\}$$ by Lemma~\ref{div}, since $(1,3)$ is comparable to all elements of $V$ except $(2,2)$.
 Symmetrically, we have
$$\gamma(u_{m-2}u_m) \in \{u_{m-2}u_m, u_{m-1}^2\}.$$
Thus so far, we have six pairwise distinct minimal monomial generators of $I^2$, namely
\begin{equation}\label{so far 6}
A=\{u_1^2,u_1u_2,\gamma(u_1u_3),\gamma(u_{m-2}u_m),u_{m-1}u_m,u_m^2\} \subseteq \mathcal{G}(I^2).
\end{equation}
Considering $u_1u_m$, we distinguish two cases.

\medskip
\noindent
\textbf{Case 1: $\gamma(u_1u_m)\not=u_1u_m$.} Therefore $u_1u_m \notin \mathcal{G}(I^2)$, and $\gamma(u_1u_m) = u_ru_s$ for some $r,s \in [2,m-1]$ such
that $r \le s$. Since $\div(u_2u_3) \cap \div(u_{m-2}u_{m-1}) \subseteq \{u_1u_m\}$ by Lemma~\ref{div}, it follows that $\gamma(u_2u_3) \not=
\gamma(u_{m-2}u_{m-1})$. Moreover, it also follows from Lemma~\ref{div} that $\gamma(u_2u_3), \gamma(u_{m-2}u_{m-1}) \notin A$. So let
$$
A'=A \sqcup\{\gamma(u_2u_3), \gamma(u_{m-2}u_{m-1})\}.
$$
Then $\textrm{card}(A')=8$. There are two subcases.

\textbf{Case (1.1)} If $\gamma(u_1u_m) \notin A'$, then we are done, since joining $\gamma(u_1u_m)$ to $A'$ yields nine minimal generators of $I^2$.

\textbf{Case (1.2)} If $\gamma(u_1u_m) \in A'$, then by Lemma~\ref{div}, the only possibilities are as follows:
$$
\begin{array}{lllll}
\gamma(u_1u_m) & = & u_2^2 & = & \gamma(u_1u_3), \\
\gamma(u_1u_m) & = & u_2u_3 & = & \gamma(u_2u_3), \\
\gamma(u_1u_m) & = & u_{m-2}u_{m-1} & = & \gamma(u_{m-2}u_{m-1}), \\
\gamma(u_1u_m) & = & u_{m-1}^2 & = & \gamma(u_{m-1}^2).
\end{array}
$$
But then in each case, Lemma~\ref{div} implies $\gamma(u_2u_{m-2}) \notin A'$, and we are done again.

\medskip
\noindent
\textbf{Case 2: $\gamma(u_1u_m)=u_1u_m$.} Then $u_1u_m \in \mathcal{G}(I^2)$. Let
$$
A'=A \sqcup\{u_1u_m\}=\{u_1^2,u_1u_2,\gamma(u_1u_3),\gamma(u_{m-2}u_m),u_{m-1}u_m,u_m^2,u_1u_m\} .
$$
Then $A' \subseteq \mathcal{G}(I^2)$ and $\textrm{card}(A')=7$. Comparing $\gamma(u_{1}u_{m-1})$ to $\gamma(u_{1}u_{3})$, and $\gamma(u_{2}u_{m})$ to
$\gamma(u_{m-2}u_{m})$, gives rise to four subcases. Let us consider them in turn.

\smallskip
\textbf{Case (2.1): $\gamma(u_{1}u_{m-1})=\gamma(u_{1}u_{3})$, $\gamma(u_{2}u_{m})=\gamma(u_{m-2}u_{m})$.}

Since $\div(u_{1}u_{m-1}) \cap \div(u_{1}u_{3}) \subseteq \{u_2^2\}$, we have $\gamma(u_{1}u_{m-1})=\gamma(u_{1}u_{3}) = u_2^2$. Similarly, we have
$\gamma(u_{2}u_{m})=\gamma(u_{m-2}u_{m})=u_{m-1}^2$. Therefore
\begin{eqnarray*}
u_2^2 & | & u_{1}u_{m-1}, \\
u_{m-1}^2 & | & u_2u_m.
\end{eqnarray*}
Hence $u_2^2 u_{m-1}^2$ $|$ $u_{1}u_{m-1}u_2u_m$, implying $u_2 u_{m-1}$ $|$ $u_{1}u_m$. Whence
\begin{equation}\label{u_1u_m}
u_2 u_{m-1}=u_{1}u_m
\end{equation}
since $u_1u_m \in \mathcal{G}(I^2)$. Let
$$
A''=A' \cup\{\gamma(u_2u_3), \gamma(u_{m-2}u_{m-1})\}.
$$
Then $A'' \subseteq \mathcal{G}(I^2)$ by construction. We claim that $\textrm{card}(A'') = 9$. Indeed, Lemma~\ref{div} implies
$$
\{\gamma(u_2u_3), \gamma(u_{m-2}u_{m-1})\} \cap A' \subseteq \{u_1u_m\}.
$$
But since $u_1u_m=u_2u_{m-1}$ here, as observed in \eqref{u_1u_m}, it follows that
\begin{equation}\label{u2u3}
u_1u_m \notin \div(u_2u_3) \cup \div(u_{m-2}u_{m-1}),
\end{equation}
whence the above intersection is empty. Moreover, since $\div(u_2u_3) \cap \div(u_{m-2}u_{m-1}) \subseteq \{u_1u_m\}$, it follows from \eqref{u2u3} that
$\gamma(u_2u_3) \not= \gamma(u_{m-2}u_{m-1})$. Therefore $\textrm{card}(A'') = 9$ as claimed, and we are done in this case.

\smallskip
\textbf{Case (2.2): $\gamma(u_{1}u_{m-1})=\gamma(u_{1}u_{3})$, $\gamma(u_{2}u_{m})\not=\gamma(u_{m-2}u_{m})$.}

This is by far the most delicate case. As observed in the preceding case, the equality $\gamma(u_{1}u_{m-1})=\gamma(u_{1}u_{3})$ implies
$\gamma(u_{1}u_{m-1})=\gamma(u_{1}u_{3})=u_2^2$. Now $\gamma(u_{2}u_{m}) \notin A'$, as follows from Lemma~\ref{div} and the inequality
$\gamma(u_{2}u_{m})\not=\gamma(u_{m-2}u_{m})$. Let
$$
A''=A' \cup\{\gamma(u_{2}u_{m})\}.
$$
Then $A'' \subseteq \mathcal{G}(I^2)$ and $\textrm{card}(A'') = 8$. We distinguish four subcases, according to the values of $\gamma(u_{2}u_{m})$ and
$\gamma(u_{m-2}u_{m})$. Recall that $\gamma(u_{m-2}u_{m}) \in \{u_{m-2}u_{m}, u_{m-1}^2\}$.

\smallskip
\textbf{Case (2.2.1): $\gamma(u_{2}u_{m})=u_{2}u_{m}$, $\gamma(u_{m-2}u_{m})=u_{m-2}u_{m}$.}

Then
$$
A''=\{u_1^2,u_1u_2,u_2^2,u_{m-2}u_m,u_{m-1}u_m,u_m^2,u_1u_m, u_2u_m\} .
$$

Interestingly here, for $m=5$, there are explicit cases where $A'' = \mathcal{G}(I^2)$. Now under our hypothesis $m \ge 6$, a ninth element in
$\mathcal{G}(I^2) \setminus A''$ is given by $\gamma(u_3u_m)$, as easily follows from the present shape of $A''$ and Lemma~\ref{div}.

\smallskip
\textbf{Case (2.2.2): $\gamma(u_{2}u_{m})=u_{2}u_{m}$, $\gamma(u_{m-2}u_{m})=u_{m-1}^2$.}

Then
$$
A''=\{u_1^2,u_1u_2,u_2^2,u_{m-1}^2,u_{m-1}u_m,u_m^2,u_1u_m, u_2u_m\} .
$$

Consider $u_2u_{m-1}$. If $\gamma(u_2u_{m-1}) \notin A''$, we have our ninth minimal generator and we are done. Assume now for a contradiction that
$\gamma(u_2u_{m-1}) \in A''$. Then under the present shape of $A''$ and Lemma~\ref{div}, the only possibility is $\gamma(u_2u_{m-1})=u_1u_m$. Therefore
\begin{eqnarray*}
u_2^2 & | & u_{1}u_{m-1}, \\
u_1u_{m} & | & u_2u_{m-1}.
\end{eqnarray*}
Hence $u_1u_2^2u_{m} \, | \, u_{1}u_2u_{m-1}^2$, implying $u_2u_m \, | \, u_{m-1}^2$. This is a contradiction, since $u_{m-1}^2 = \gamma(u_{m-2}u_{m})$,
whence $u_{m-1}^2$ divides $u_{m-2}u_{m}$ which is not divisible by $u_2u_m$.

\smallskip
\textbf{Case (2.2.3): $\gamma(u_{2}u_{m})\not=u_{2}u_{m}$, $\gamma(u_{m-2}u_{m})=u_{m-2}u_{m}$.}

By Lemma~\ref{div}, we have $\gamma(u_{2}u_{m})=u_ru_s$ for some $3 \le r \le s \le m-1$. Then
$$
A''=\{u_1^2,u_1u_2,u_2^2,u_{m-2}u_m,u_{m-1}u_m,u_m^2,u_1u_m, u_ru_s\} .
$$

Consider $u_{m-3}u_m$. If $\gamma(u_{m-3}u_m) \notin A''$, we have our ninth minimal generator and we are done. If $\gamma(u_{m-3}u_m) \in A''$, then the
only possibility allowed by Lemma~\ref{div} is $\gamma(u_{m-3}u_m) =u_ru_s$. The same lemma then implies $m-2 \le r \le s \le m-1$.

Consider now $u_2u_3$.  If $\gamma(u_2u_3) \notin A''$, we are done. Assume now for a contradiction that $\gamma(u_2u_3) \in A''$. Then the only
possibility is $\gamma(u_2u_3)=u_1u_m$. Therefore, since $\gamma(u_1u_3)=u_2^2$ in the present case, we have
\begin{eqnarray*}
u_2^2 & | & u_{1}u_{3}, \\
u_1u_{m} & | & u_2u_{3}.
\end{eqnarray*}
Hence $u_1u_2^2u_{m} \, | \, u_{1}u_2u_{3}^2$, implying $u_2u_m \, | \, u_{3}^2$. This is a contradiction, since $\gamma(u_{2}u_{m}) = u_{r}u_s$, whence
$u_ru_s$ divides $u_{2}u_{m}$ but cannot divide $u_3^2$ since $r,s \in [m-2,m-1]$.

\smallskip
\textbf{Case (2.2.4): $\gamma(u_{2}u_{m})\not=u_{2}u_{m}$, $\gamma(u_{m-2}u_{m})=u_{m-1}^2$.}

As above, we have $\gamma(u_{2}u_{m})=u_ru_s$ for some $3 \le r \le s \le m-1$, and here with $(r,s) \not= (m-1,m-1)$ since
$\gamma(u_{2}u_{m})\not=\gamma(u_{m-2}u_{m})$ in the present Case (2.2). Then
$$
A''=\{u_1^2,u_1u_2,u_2^2,u_{m-1}^2,u_{m-1}u_m,u_m^2,u_1u_m, u_ru_s\} .
$$
To settle this case, it suffices to show that either $\gamma(u_2u_{m-1})$ or $\gamma(u_{m-2}u_{m-1})$ lies outside $A''$. Assume for a contradiction the
contrary, i.e.
\begin{equation}\label{contra}
\{\gamma(u_2u_{m-1}), \gamma(u_{m-2}u_{m-1})\} \subset A''.
\end{equation}
Lemma~\ref{div} then implies
\begin{equation}\label{cond1}
\{\gamma(u_2u_{m-1}), \gamma(u_{m-2}u_{m-1})\} \subseteq \{u_1u_m, u_ru_s\}.
\end{equation}
On the other hand, that same lemma implies
\begin{equation}\label{cond2}
\div(u_2u_{m-1}) \cap \div(u_{m-2}u_{m-1}) \subseteq \{u_1u_m\}.
\end{equation}

We cannot have $\gamma(u_{m-2}u_{m-1})=u_1u_m$. For otherwise, in the present Case (2.2.4), that equality would imply
\begin{eqnarray*}
u_1u_{m} & | & u_{m-2}u_{m-1}, \\
u_{m-1}^2 & | & u_{m-2}u_{m}.
\end{eqnarray*}
Therefore $u_1u_{m-1}^2u_{m} \, | \, u_{m-2}^2u_{m-1}u_{m}$, whence $u_1u_{m-1} \, | \, u_{m-2}^2$. This is impossible, since $u_2^2$ divides $u_1u_{m-1}$
in the present Case (2.2), whereas $u_2^2$ cannot divide $u_{m-2}^2$ by Lemma~\ref{div}.

Therefore $\gamma(u_{m-2}u_{m-1}) \not=u_1u_m$, whence $\gamma(u_{m-2}u_{m-1})=u_ru_s$ by \eqref{cond1}. Since $r,s \in [3,m-1]$, this and Lemma~\ref{div}
imply  $(r,s)=(m-2,m-1)$. That is, we have
\begin{equation}\label{u_ru_s}
\gamma(u_{m-2}u_{m-1}) = u_ru_s=u_{m-2}u_{m-1}.
\end{equation}

Consider now $u_2u_{m-1}$. It follows from \eqref{cond1}, \eqref{cond2} and \eqref{u_ru_s}, that $\gamma(u_2u_{m-1})=u_1u_m$. In the present Case (2.2),
this yields
\begin{eqnarray*}
u_1u_{m} & | & u_2u_{m-1}, \\
u_2^2 & | & u_{1}u_{m-1}.
\end{eqnarray*}
Therefore $u_2u_m \, | \, u_{m-1}^2$. But $u_{m-1}^2$ divides $u_{m-2}u_m$ in the present Case (2.2.4), yet $u_2u_m$ cannot divide $u_{m-2}u_m$. This
contradiction shows that \eqref{contra} is absurd. Therefore
$$
\textrm{card}\big(A'' \cup \{\gamma(u_2u_{m-1}), \gamma(u_{m-2}u_{m-1})\}\big) \ge 9.
$$
This settles Case (2.2.4) and concludes Case (2.2).

\smallskip
\textbf{Case (2.3): $\gamma(u_{1}u_{m-1})\not=\gamma(u_{1}u_{3})$, $\gamma(u_{2}u_{m})=\gamma(u_{m-2}u_{m})$.}

This case is symmetrical to Case (2.2). It follows from it by interchanging the variables $x,y$ and reversing the sequence $u_1,\dots,u_m$.

\smallskip
\textbf{Case (2.4): $\gamma(u_{1}u_{m-1})\not=\gamma(u_{1}u_{3})$, $\gamma(u_{2}u_{m})\not=\gamma(u_{m-2}u_{m})$.}

Being in Case (2), recall that
$$
A'=A \sqcup\{u_1u_m\}=\{u_1^2,u_1u_2,\gamma(u_1u_3),\gamma(u_{m-2}u_m),u_{m-1}u_m,u_m^2,u_1u_m\},
$$
a subset of cardinality $7$ in $\mathcal{G}(I^2)$. It follows from the present hypotheses that
\begin{equation}\label{empty}
A' \cap \{\gamma(u_{1}u_{m-1}), \gamma(u_{2}u_{m})\} = \emptyset.
\end{equation}
Thus, it suffices to show $\gamma(u_{1}u_{m-1}) \not= \gamma(u_{2}u_{m})$, for then this would imply
\begin{equation}\label{9}
\textrm{card}\big(A' \cup \{\gamma(u_{1}u_{m-1}), \gamma(u_{2}u_{m})\}\big) = 9
\end{equation}
and we would be done. Assume for a contradiction the contrary, i.e. that $\gamma(u_{1}u_{m-1}) = \gamma(u_{2}u_{m})$. Lemma~\ref{div} then implies
\begin{equation}\label{(1,m)}
\gamma(u_{1}u_{m-1}) = \gamma(u_{2}u_{m}) = u_ru_s
\end{equation}
for some $r,s \in [3,m-2]$ with $r \le s$. Thus $u_ru_s$ divides both $u_{1}u_{m-1}$ and $u_{2}u_{m}$. Since $(1,m-1) < (1,m) < (2,m)$ in $V$,
Lemma~\ref{interval} implies that $u_ru_s$ also divides $u_1u_m$. But since $u_1u_m \in A' \subseteq \mathcal{G}(I^2)$, it follows that $u_ru_s=u_1u_m$.
Thus $u_ru_s \in A'$, contrary to the conjunction of \eqref{(1,m)} and \eqref{empty}. This contradiction settles \eqref{9} and concludes the proof of the
present last case, and hence of the theorem.
\end{proof}

\medskip
 For completeness, let us give optimal lower bounds on $\mu(I^2)$ for monomial ideals $I \subset K[x,y]$ such that $\mu(I)=m \le 5$. The result is as
 follows.
$$
\mu(I^2) \ge \left\{
\begin{array}{cc}
1 & \textrm{ if } m=1, \\
3 & \textrm{ if } m=2, \\
5 & \textrm{ if } m=3, \\
7 & \textrm{ if } m=4, \\
8 & \textrm{ if } m=5.
\end{array}
\right.
$$
As stated, these five lower bounds are sharp. The verification is left to the reader.

\medskip
Let us conclude this paper with two questions. Let $m \ge 6$ be an integer, and let $I \subset K[x,y]$ be a monomial ideal such that $\mu(I)=m$. We have
seen in Proposition~\ref{single degree} that if $I$ is generated in a single degree, then $\mu(I^2) \ge 2m-1$. Our ideals in Proposition~\ref{crazy ideal}
reaching the absolute minimum $\mu(I^2) =9$ are generated in two degrees depending on $m$, namely $5m$ and $7m+3$.

Here is our first question. If $I$ is generated in two degrees $d_1 < d_2$ \emph{such that the difference $d=d_2-d_1$ is fixed and independent of $m$},
does it follow that $\mu(I^2)$ must grow to infinity with $m$? This seems to be true for $d=1$, but it would be nice to have a proof.

Our second question is, what would be the proper generalization of the present results in $n$ variables?

 \medskip


\begin{thebibliography}{99}
\bibitem{HW}  J. Herzog and R. Waldi,  A note on the Hilbert function of a one-dimensional Cohen-Macaulay ring. Manuscripta Math. {\bf 16} (1975),  251–-260.

\bibitem{O} F.  Orecchia, One-dimensional local rings with reduced associated graded ring and their Hilbert functions. Manuscripta Math. {\bf 32} (1980),  391--405.

\bibitem{HMZ}   J. Herzog, M. Mohammadi Saem and N. Zamani, On the number of generators of powers of an ideal, 2017,  arXiv:1707.07302.

 \end{thebibliography}
\end{document}